\DeclareMathOperator{\td}{td}  
\newcommand{\Z}{\ensuremath{\mathbb{Z}}}
\newcommand{\Q}{\ensuremath{\mathbb{Q}}}
\newcommand{\C}{\ensuremath{\mathbb{C}}}
\renewcommand{\L}{\ensuremath{\mathcal{L}}} 
\renewcommand{\phi}{\varphi}
\newcommand{\into}{\hookrightarrow}
\newcommand{\nstrong}{\ensuremath{\not\kern-4pt\lhd\;}} 
\newbox\noforkbox \newdimen\forklinewidth
\noforkbox\hbox{\lower 2pt\box1\lower
2pt\box0\relax}
\def\unionstick{\mathop{\copy\noforkbox}\limits}
\def\nonfork_#1{\unionstick_{\textstyle #1}}
\newbox\doesforkbox
\doesforkbox\hbox{\lower 2pt\box1 \lower
2pt\box2\lower2pt\box0\relax}
\def\nunionstick{\mathop{\copy\doesforkbox}\limits}
\def\fork_#1{\nunionstick_{\textstyle #1}}
\DeclareMathOperator{\height}{height}
\DeclareMathOperator{\depth}{depth}
\newtheorem{prop}{Proposition}
\newtheorem{cor}[prop]{Corollary}
\newtheorem{theorem}[prop]{Theorem}
\newtheorem{lemma}[prop]{Lemma}
\newtheorem{conjecture}[prop]{Conjecture}
\theoremstyle{definition}
\newtheorem{defn}[prop]{Definition}
\newtheorem{example}[prop]{Example}
\newcommand{\y}{\bar{y}}
\newcommand{\then}{\Rightarrow}
\title{Schanuel's Conjecture and Algebraic Roots of Exponential Polynomials}
\author{Ahuva C. Shkop}
\begin{document}

\maketitle

\begin{abstract}
In this paper, I will prove that assuming Schanuel's conjecture, an exponential polynomial with algebraic coefficients
can have only finitely many algebraic roots.  Furthermore, this proof demonstrates that there are no unexpected algebraic
roots of any exponential polynomial.  This implies a special case of Shapiro's conjecture: if $p(x)$ and $q(x)$ are two such  exponential polynomials with algebraic
coefficients which have common factors only of the form $\exp(g)$ for some exponential polynomial $g$, $p$ and $q$
have only finitely many common zeros.
\end{abstract}

\section{Introduction}

In the 1960's, Schanuel made the following conjecture:

\begin{conjecture} If $\{z_1,...,z_n \}\subset \C$, then $\td_\Q(z_1,...,z_n,e^{z_1},...,e^{z_n})$, where $\td_\Q$ is the transcendence 
degree over $\Q$, is at least the
 $\Q$ linear dimension of $\{z_1,...,z_n\}$

\end{conjecture}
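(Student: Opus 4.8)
The plan must begin with an honest acknowledgment: the final statement is not a lemma internal to the paper but Schanuel's conjecture itself, the central open problem of transcendental number theory, for which no proof is known once $n \geq 2$. What I can offer is the framework in which one would attack it and a precise diagnosis of where the attack stalls. The natural strategy is induction on $n$ by the classical auxiliary-function method. The base case $n=1$ is genuinely provable: it reduces to the Hermite--Lindemann theorem, since for $z \neq 0$ at least one of $z$, $e^z$ is transcendental, giving $\td_\Q(z,e^z) \ge 1 = \ldim_\Q\{z\}$.

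For the inductive step I would argue toward a contradiction. Set $d \leteq \td_\Q(z_1,\ldots,z_n,e^{z_1},\ldots,e^{z_n})$ and $\ell \leteq \ldim_\Q\{z_1,\ldots,z_n\}$, and suppose $d < \ell$. After fixing a transcendence basis among the $2n$ values, the idea is to construct an auxiliary polynomial $P$ with algebraic-integer coefficients so that the entire function $F(w) = P(z_1 w,\ldots,z_n w, e^{z_1 w},\ldots,e^{z_n w})$ (or a genuinely $n$-variable analogue on $\C^n$) vanishes to high order on a large finite set; the coefficients are produced by Siegel's lemma using the assumed deficiency $d < \ell$. The functional equation $e^{a+b}=e^a e^b$ propagates these vanishing conditions, a maximum-modulus/interpolation estimate forces $F$ to take values too small to be nonzero algebraic numbers of controlled height, and a zero estimate on the group variety $\ga^n \times \gm^n$ in the style of Masser--W\"ustholz prevents $F$ from vanishing identically. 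The collision between analytic smallness and the arithmetic lower bound on heights would then contradict $d < \ell$.

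A second route, more in keeping with the paper's model-theoretic apparatus, is to descend from the functional case. The Ax--Schanuel theorem is the proven differential-algebraic analogue, asserting the Schanuel inequality for $z_1,\ldots,z_n$ in a differential field, with $\ldim$ taken modulo the constants. The plan here would be to specialize this statement to the genuine complex numbers, treating $\C$ as the field of constants and seeking a transfer principle; one framework for such a transfer is Zilber's pseudo-exponential fields $\fieldE$, in which the Schanuel inequality holds by construction, so that the conjecture becomes the assertion that $(\C,\exp)$ realizes the canonical pseudo-exponential structure.

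The hard part is the same in both routes, and is precisely why the conjecture is open. The auxiliary-function method delivers only approximate, quantitative algebraic independence and cannot, in the dimension required, upgrade ``the values lie very close to a proper algebraic relation'' to ``no such relation exists.'' The descent from Ax--Schanuel founders because functional transcendence is strictly weaker than arithmetic transcendence: Ax's hypotheses quotient out exactly the constant directions in which the arithmetic content of Schanuel's conjecture resides. Consequently, the realistic contribution is not a proof but to \emph{assume} this statement and extract consequences from it, which is the route the paper in fact pursues.
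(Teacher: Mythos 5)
The statement you were given is Schanuel's conjecture itself, which the paper presents only as a \emph{conjecture} and never proves --- every result in the paper explicitly assumes it as a hypothesis. Your honest diagnosis (the case $n=1$ reduces to Hermite--Lindemann, the general case is open, and the only viable move is to assume the statement and extract consequences) is correct and matches the paper's own treatment exactly.
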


While there are proofs of special cases of this statement (e.g. Lindemann-Weierstrass Theorem),
Schanuel's conjecture is as yet unproven.  In \cite{P-exp}, Zilber constructs an algebraically closed exponential
field known as pseudoexponentiation which satisfies the analog of Schanuel's conjecture.  We will make use of the following
generalization of Schanuel's conjecture.

\begin{defn}
An algebraically closed exponential field $K$ satisfies Schanuel's conjecture if for any $\{z_1,...,z_n \}\subset K$, the
$\td_\Q(z_1,...,z_n,\exp(z_1),...,\exp(z_n))$ is at least the
 $\Q$ linear dimension of $\{z_1,...,z_n\}$
\end{defn}

In this paper we will
give various consequences of Schanuel's conjecture.  Since Zilber's construction satisfies this as well as the more general conditions we will set on the 
algebraically closed exponential field, these results are theorems of pseudoexponentiation.

We will use the notation $\Q^{alg}$ to refer to the algebraic closure of the rational numbers. The goal of this paper is to prove the following theorem:

\begin{theorem}\label{Thm 1} Suppose $p(x)$ is an exponential polynomial in $\Q^{alg}[x]^E$.
 Then Schanuel's conjecture implies that $p(x)$ has finitely many algebraic zeros.
\end{theorem}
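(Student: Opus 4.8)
The plan is to argue by induction on the exponential order of $p$ (the depth of nesting of $\exp$), the base case being an ordinary polynomial in $\Q^{\alg}[x]$, which has at most $\deg p$ zeros. For the inductive step I would first put $p$ into the normal form $p(x) = \sum_{\alpha \in A} p_\alpha(x)\exp(\lambda_\alpha(x))$, where $A$ is finite, each $p_\alpha \in \Q^{\alg}[x]$ is nonzero, and the exponents $\lambda_\alpha$ are pairwise distinct $\Z$-linear combinations $\langle \alpha, \mu\rangle$ of a fixed tuple $\mu = (\mu_1, \ldots, \mu_m)$ of exponential polynomials of strictly smaller order that are $\Q$-linearly independent as functions. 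The point of this bookkeeping is that every difference $\lambda_\alpha - \lambda_\beta = \langle \alpha - \beta, \mu\rangle$ is again an exponential polynomial of strictly smaller order, and is not identically zero when $\alpha \neq \beta$.

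Now fix an algebraic zero $a$ and set $z_j = \mu_j(a)$ and $y_j = \exp(z_j)$. Since $a$ is algebraic and the $\mu_j$ are built in a chain over $\Q^{\alg}[x]$, each $z_j$ lies in $\Q^{\alg}(y_1, \ldots, y_m)$, so $\td_\Q(z_1, \ldots, z_m, y_1, \ldots, y_m) = \td_\Q(y_1, \ldots, y_m)$. Applying Schanuel's conjecture to $z_1, \ldots, z_m$ and writing $r = \ldim_\Q\{z_1, \ldots, z_m\}$, I would deduce $\td_\Q(y_1, \ldots, y_m) \ge r$; since the $y_j$ attached to a maximal $\Q$-independent subset of the $z_j$ generate the remaining $y_j$ by radicals, this forces those $r$ values to be algebraically independent over $\Q^{\alg}$. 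Evaluating $p(a) = \sum_\alpha p_\alpha(a)\exp(\lambda_\alpha(a)) = 0$ and grouping the terms according to the value $\lambda_\alpha(a)$, each group is a $\Q^{\alg}$-scalar times a distinct monomial in suitable fixed roots of these independent values; as distinct monomials in algebraically independent quantities are $\Q^{\alg}$-linearly independent, the coefficient of each group must vanish, i.e. $\sum_{\alpha : \lambda_\alpha(a) = v} p_\alpha(a) = 0$ for every value $v$ occurring.

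This yields a clean dichotomy at each algebraic zero $a$. Either the values $\lambda_\alpha(a)$ are pairwise distinct, in which case every group is a singleton and we obtain $p_\alpha(a) = 0$ for all $\alpha \in A$; since each $p_\alpha$ is a nonzero polynomial, such $a$ form a finite set. Or two exponents collide, $\lambda_\alpha(a) = \lambda_\beta(a)$ for some $\alpha \neq \beta$, so that $a$ is a zero of the exponential polynomial $\lambda_\alpha - \lambda_\beta$, which is nonzero and of strictly smaller order; by the inductive hypothesis it has only finitely many algebraic zeros. Ranging over the finitely many pairs $(\alpha, \beta)$, the algebraic zeros of $p$ lie in a finite union of finite sets, and hence are finite in number.

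The main obstacle is the middle step: extracting from the single relation $p(a) = 0$ the vanishing of each isotypic component. This is exactly where Schanuel's conjecture does the work, and the delicacy is that the needed independence must be read off at the specific point $a$ rather than generically — one must verify that the transcendence degree supplied by Schanuel is carried entirely by a transcendence basis among the $y_j$, so that terms with distinct exponent-values cannot cancel. Arranging the normal form so that exponent differences are genuinely of lower order (keeping the induction well-founded) and disposing of the degenerate cases — in particular a common factor of the form $\exp(g)$, which is what links the result to Shapiro's conjecture — are the remaining points requiring care.
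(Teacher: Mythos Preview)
Your proposal is correct and follows essentially the same strategy as the paper: set up a normal form via a refined decomposition, use Schanuel at an algebraic zero $a$ to force the ``collapse'' dichotomy (either every polynomial coefficient vanishes at $a$, or two exponents coincide at $a$), and then induct on height. The paper isolates the collapse statement as a separate theorem and proves injectivity of $\exp$ on $\Q^{\alg}[x]_E$ as a preliminary lemma, whereas you recover the needed injectivity at $a$ directly from the algebraic independence of the $y_j$; your closing remark about common factors $\exp(g)$ pertains to the Shapiro corollary rather than to this theorem and is not needed here.
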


We will define $\Q^{alg} [x]^E$ in the following section.

\section{The Exponential Polynomial Ring}

We will begin with the following definitions.

\begin{defn}
In this paper, a \emph{(total) E-ring} is a $\Q$-algebra $R$ with
  no zero divisors, together with a homomorphism
  $\exp: \langle R,+ \rangle \to \langle R^*,\cdot \rangle$.

  A \emph{partial E-ring} is a $\Q$-algebra $R$ with no zero
  divisors, together with a $\Q$-linear subspace $A(R)$ of $R$ and a
  homomorphism $\exp: \langle A(R),+ \rangle \to \langle R^*,\cdot \rangle$. $A(R)$ is then the domain of $\exp$.

  An  \emph{E-field} is an E-ring which is a field.

  We say \emph{$S$ is a partial E-ring extension of $R$} if $R$ and $S$ are partial E-ring, $R\subset S$, and for all $r\in A(R)$,
  $\exp_S(r)=\exp_R(r)$.
\end{defn}
\noindent Recall the following construction of $K[X]^E$, the exponential polynomial ring over an E-field $K$ on the set of
indeterminates $X$: (see \cite{Lou E-rings},\cite{Angus E-rings})

\bigskip

If $R$ is a partial E-ring, we can construct $R'$, a partial E-ring extension of $R$, with the following properties:
\begin{itemize}
\item The domain of the exponential map in $R'$ is precisely $R$.
\item The kernel of the exponential map in $R'$ is precisely the kernel of the exponential map in $R$.
\item If $y_i \notin A(R)$ for $i=1,...,n$, then $\td_R(\exp_{R'}(\y))$ in $R'$ will be exactly the $\Q$-linear dimension of $\y$ over
    $A(R)$.
\item $R'$ is generated as a ring by $R\cup \exp(R)$.
\end{itemize}

For $K$ an E-field and $X$ a set of indeterminates, let $K[X]$ be the partial E-ring where $A(K[X])= K$. Then the exponential polynomial ring over $K$, $K[X]^E$,  is simply the union of the chain

 \[ K[X]= R_0 \into R_1 \into R_2 \into R_3 \into R_4  \into \cdots \]

where $R_{n+1} = R_n'$.

\medskip

\noindent This construction yields a natural notion of height.

\begin{defn}
For $p$ an exponential polynomial  $$\height(p)= \min \{ i: p\in R_i\}$$
\end{defn}

\begin{example}
The exponential polynomial $p(x_1,x_2)= \exp(\exp(\frac{x_1}{2} + x_2^2)) + x_1^3$ in $\C[x_1,x_2]^E$  has
height $2$.

\end{example}

Fix $K$, an algebraically closed exponential field. Let $\Q^{alg}[x]_E$ be the exponential subring of $K[x]^E$
generated by $\Q^{alg}[x]$.

\bigskip

\noindent \begin{bf}NOTE\end{bf}: The definition of $\Q^{alg}[x]_E$ depends entirely on $K$. We are fixing this algebraically closed exponential field at
this point to avoid cumbersome notation.  When we assume Schanuel's conjecture, we are assuming $K$ satisfies Schanuel's conjecture
as in the introdution.  Since we have not as of yet specified anything about the exponential map on $K$, it
is worth noting that $\Q^{alg}$ may not be an exponential field (as in the case $K = \C_{\exp}$) in which case $\Q^{alg}[x]_E$
is \emph{not} an exponential polynomial ring over $\Q^{alg}$.  Thus, when we refer to the height of an element of
$\Q^{alg}[x]_E$, we refer to its height in $K[x]^E$.

\bigskip

The following lemma is easy and useful.
\begin{lemma} Let $\widehat{\Q}$ be the union of the chain $$Q_0 \into Q_1 \into \cdots$$ where
$Q_0 = \Q^{alg}$ and $Q_{i+1} = [Q_i \cup \exp(Q_i)]$, the subring of $K$ generated by $Q_i$
and the exponential image of $Q_i$.

Clearly $\widehat{\Q}$ is an E-ring.

Then $\Q^{alg}[x]_E = \widehat{\Q}[x]^E$, the free E-ring over $\widehat{\Q}$ on $x$.

\end{lemma}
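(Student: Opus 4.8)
The plan is to place both rings inside the ambient $K[x]^E$ and to show they coincide there. Since $\widehat{\Q}$ is, as noted, a sub-E-ring of $K$ (if $a\in\widehat{\Q}$ then $a\in Q_i$ for some $i$, whence $\exp(a)\in Q_{i+1}\subseteq\widehat{\Q}$), the inclusion of partial E-rings $\widehat{\Q}[x]\into K[x]$, each carrying its ring of constants as exponential domain, lifts by functoriality of the construction $R\mapsto R'$ applied inductively along the two defining chains to an E-ring homomorphism $\phi:\widehat{\Q}[x]^E\to K[x]^E$ fixing $\widehat{\Q}$ pointwise and sending $x\mapsto x$. Write $B:=\img\phi$; being the image of an E-ring homomorphism, $B$ is an exponential subring of $K[x]^E$.

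Next I would prove the two inclusions that identify $B$ with $\Q^{alg}[x]_E$. For $\Q^{alg}[x]_E\subseteq B$: since $\Q^{alg}\subseteq\widehat{\Q}=\phi(\widehat{\Q})\subseteq B$ and $x\in B$, the subring $B$ contains $\Q^{alg}[x]$, so minimality of $\Q^{alg}[x]_E$ as the exponential subring \emph{generated} by $\Q^{alg}[x]$ forces $\Q^{alg}[x]_E\subseteq B$. For the reverse inclusion I would first check $\widehat{\Q}\subseteq\Q^{alg}[x]_E$ by induction along $Q_0\into Q_1\into\cdots$: indeed $Q_0=\Q^{alg}\subseteq\Q^{alg}[x]_E$, and if $Q_i\subseteq\Q^{alg}[x]_E$ then $\exp(Q_i)\subseteq\Q^{alg}[x]_E$ by exponential closure and hence $Q_{i+1}=[Q_i\cup\exp(Q_i)]\subseteq\Q^{alg}[x]_E$ by ring closure. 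Since each stage $R_{n+1}$ of the chain defining $\widehat{\Q}[x]^E$ is generated as a ring by $R_n\cup\exp(R_n)$, an easy induction shows $B=\widehat{\Q}[x]^E$ is generated as an E-ring by $\widehat{\Q}\cup\{x\}$; as $\Q^{alg}[x]_E$ is an exponential subring containing $\widehat{\Q}$ and $x$, this gives $B\subseteq\Q^{alg}[x]_E$, and so $B=\Q^{alg}[x]_E$.

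The remaining and, I expect, hardest point is to upgrade this equality of subrings to $\Q^{alg}[x]_E=\widehat{\Q}[x]^E$, i.e.\ to show that $\phi$ is injective, so that $\Q^{alg}[x]_E$ is genuinely \emph{free} over $\widehat{\Q}$ rather than a proper quotient of the free E-ring. The natural strategy is to show that the chain $\widehat{\Q}[x]=R_0\subseteq R_1\subseteq\cdots$ realized \emph{inside} $K[x]^E$ already attains, at each stage, the maximal transcendence degree demanded by the third bullet of the construction, making it a faithful copy of the abstract free chain. The delicate part is that maximality there is a statement of independence over the \emph{small} domain, whereas $K[x]^E$ directly supplies independence only over the \emph{large} domain containing all of $K$. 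I would bridge this by an induction on height establishing simultaneously that the constants of $\Q^{alg}[x]_E$ are exactly $\widehat{\Q}$ (no new element of $K$ is manufactured from $\widehat{\Q}$ and $x$) and that $\Q^{alg}[x]_E\cap R_n^{K}=R_n$ at each stage. Both rest on the freeness of $K[x]^E$ over the E-field $K$, which keeps exponentials of $x$-involving arguments genuinely $x$-dependent and algebraically independent, so that any $\Q$-linear dependence over the large domain among elements of $\Q^{alg}[x]_E$ is already one present over $\widehat{\Q}$. Propagating these two facts up the height filtration is the technical heart, and the step I expect to require the most care.
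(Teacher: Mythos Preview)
The paper gives no proof of this lemma at all: it is introduced with ``The following lemma is easy and useful'' and left at that. So there is nothing to compare against, and the question is simply whether your outline is sound.

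Your two easy inclusions are fine: $\Q^{alg}[x]_E\subseteq B$ by minimality, and $B\subseteq\Q^{alg}[x]_E$ by the inductive check that $\widehat{\Q}\subseteq\Q^{alg}[x]_E$ together with E-ring generation. You are also right that the real content is the injectivity of $\phi:\widehat{\Q}[x]^E\to K[x]^E$, and your diagnosis of why is accurate: the free construction over $\widehat{\Q}$ adjoins exponentials whose algebraic independence is measured by $\Q$-linear dimension over $\widehat{\Q}$, while in $K[x]^E$ one only immediately controls $\Q$-linear dimension over $K$.

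Your proposed bridge does work, and in fact the induction you sketch can be made quite concrete. At stage $0$ one may take the complement $V_0=x\,\widehat{\Q}[x]$ of $\widehat{\Q}$ in $\widehat{\Q}[x]$ and the complement $W_0=x\,K[x]$ of $K$ in $K[x]$; then $V_0=W_0\cap\widehat{\Q}[x]$, so a $\Q$-linear dependence over $K$ among elements of $\widehat{\Q}[x]$ is already a dependence over $\widehat{\Q}$. The group-ring description $R_1=R_0[t^{V_0}]\hookrightarrow R_0^K[t^{W_0}]=R_1^K$ then gives injectivity at stage $1$, together with $R_1\cap R_0^K=R_0$ (no nontrivial $t^v$ lies in $R_0^K$). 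The same pattern propagates: assuming $R_n\hookrightarrow R_n^K$ and $R_n\cap R_{n-1}^K=R_{n-1}$, one checks that a complement $V_n$ of $R_{n-1}$ in $R_n$ sits inside a complement $W_n$ of $R_{n-1}^K$ in $R_n^K$ with $V_n=W_n\cap R_n$, and the group-ring step goes through again. This is presumably what the author had in mind by ``easy''; your write-up is already more explicit than the paper's, and flagging injectivity as the point needing care is exactly right.
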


This filtration of $\widehat{\Q}$ yields a natural notion of depth.

\begin{defn}
Let $q \in \widehat{\Q}$.  Then $$\depth(q) = \min\{i: q \in Q_i\}.$$
\end{defn}

\begin{lemma}
Let $s \in Q_i$.

Then there are $q_1,\dots,q_n \in \Q^{alg}$ and $s_1,\dots,s_m \in Q_{i-1}$ such that

\begin{itemize}
\item For all $1\leq i \leq m$, $s_i$ is algebraic over the set $$ \{q_1,...,q_n,\exp(q_1),...,\exp(q_n),\exp(s_1),\exp(s_n)\}$$
\item $s$ is algebraic over $$\{q_1,...,q_n,s_1,...,s_m,\exp(q_1),...,\exp(q_n),\exp(s_1),\exp(s_m)\}$$
\item $q_1,...,q_n,s_1,...,s_m$ are $\Q$-linearly independent.
\end{itemize}

\end{lemma}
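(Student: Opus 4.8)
The plan is to reduce the whole statement to a single finite set of exponential arguments that is \emph{closed} under passing to subarguments, and then to extract the $q$'s and $s$'s by a linear-algebra argument. The one algebraic input I will use repeatedly is that the exponential of a $\Q$-linear combination is algebraic over the exponentials of the summands: if $c=\sum_l\alpha_l u_l$ with $\alpha_l\in\Q$, then $\exp(c)=\prod_l\exp(u_l)^{\alpha_l}$ is a root of a polynomial over $\{\exp(u_l)\}$ (clear denominators in the exponents), hence algebraic over that set. This is what forces the decomposition to be linear rather than merely algebraic.

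First I would record the generation fact that $Q_i$ is generated as a ring by $\Q^{alg}\cup\bigcup_{u\le i-1}\exp(Q_u)$, by an immediate induction from $Q_{u+1}=[Q_u\cup\exp(Q_u)]$. Writing $s$ accordingly, it is a polynomial over $\Q^{alg}$ in finitely many elements $\exp(c)$ with $c\in\bigcup_{u\le i-1}Q_u$. Collecting these arguments $c$, then the arguments appearing in the minimal-depth expression of each such $c$, and iterating, produces---because $\depth$ strictly drops at each step, so the recursion terminates---a finite set $C\subseteq Q_{i-1}$ and a finite $A\subseteq\Q^{alg}$ with two closure properties: (a) $s$ is algebraic over $A\cup\exp(C)$, and (b) every $c\in C$ is algebraic over $A\cup\{\exp(c'):c'\in C,\,\depth(c')<\depth(c)\}$. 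In particular no circularity arises, since $\exp(c)$ never occurs in the expression witnessing (b) for $c$ itself.

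Next comes the linear algebra. Let $V$ be the $\Q$-linear span of $A\cup C$; it is finite dimensional. Choose $q_1,\dots,q_n\in\Q^{alg}$ a basis of $V\cap\Q^{alg}$ (which contains $A$), and extend it to a basis $q_1,\dots,q_n,s_1,\dots,s_m$ of $V$ using elements $s_j\in C\subseteq Q_{i-1}$; this is possible since $V=\mathrm{span}_\Q(q_1,\dots,q_n)+\mathrm{span}_\Q(C)$. By construction $q_1,\dots,q_n,s_1,\dots,s_m$ are $\Q$-linearly independent, giving the third bullet. For the remaining two bullets, every $a\in A$ lies in $\mathrm{span}_\Q(q_1,\dots,q_n)$ and every $c\in C$ lies in $V=\mathrm{span}_\Q(q_1,\dots,q_n,s_1,\dots,s_m)$; applying the linear-exponential observation termwise shows $A\cup\exp(C)\subseteq\acl(\{q_l,\exp(q_l),\exp(s_j)\})$. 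Feeding this into (a) gives that $s$ is algebraic over $\{q_l,s_j,\exp(q_l),\exp(s_j)\}$ (the second bullet), and feeding it into (b) gives that each $s_j$ is algebraic over $\{q_l,\exp(q_l),\exp(s_j)\}$ (the first bullet); the dropped, linearly dependent elements of $C$ cause no trouble because their exponentials remain algebraic over $\{\exp(q_l),\exp(s_j)\}$ by the same observation.

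The main obstacle is the second step: setting up the closed finite set $C$ correctly. It is tempting to argue by induction on $i$, applying the statement to the top-level exponential arguments $t_k\in Q_{i-1}$ of $s$, but the inductive hypothesis only places $t_k$ in the \emph{algebraic} closure of lower data, which is not enough to control $\exp(t_k)$---one genuinely needs each exponential argument to lie in the $\Q$-\emph{linear} span of the chosen $q_l,s_j$. This is exactly what the downward closure $C$ together with the basis extension guarantees, and it is the reason the conclusion is phrased with the exponentials $\exp(s_1),\dots,\exp(s_m)$ rather than the $s_j$ themselves. Once $C$ is in place, the rest is routine linear algebra together with the rational-root computation above.
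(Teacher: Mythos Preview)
Your argument is correct. The paper's own proof consists of the single sentence ``This proof is an easy induction on $i$,'' so you have in effect supplied the missing details rather than taken a different route. Your recursion on $\depth$ to build the closed set $C$ is exactly the unfolding of that induction, and the subsequent basis-extraction is the step that any inductive proof would also need in order to secure the $\Q$-linear independence in the third bullet.

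The one point worth emphasizing is the subtlety you flag at the end: a naive induction that applies the lemma to each top-level argument $t_k\in Q_{i-1}$ only tells you $t_k$ is \emph{algebraic} over the lower data, which does not control $\exp(t_k)$. The fix, in inductive language, is to throw the $t_k$ themselves in among the new $s_j$'s (they lie in $Q_{i-1}\subseteq Q_i$, so this is allowed) and then thin the combined collection to a linearly independent set; the first bullet for the old $s_j$'s, together with transitivity of algebraic dependence, then yields the first bullet for each $t_k$ as well. Your construction with $C$ accomplishes precisely this, just packaged non-inductively. Either presentation is fine.
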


\begin{proof}
This proof is an easy induction on $i$.

\end{proof}

\begin{lemma}
Assume Schanuel's conjecture. Then the exponential map on $\Q^{alg}[x]_E$ is injective.
\end{lemma}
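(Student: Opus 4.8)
The plan is to reduce the statement to a property of the base E-ring $\widehat{\Q}$ and then to extract a contradiction from Schanuel's conjecture by a transcendence-degree count. Since $\exp$ is a homomorphism of groups, injectivity is the same as triviality of its kernel, so it suffices to show that $\exp(z)=1$ forces $z=0$. First I would reduce to the base ring: by the lemma identifying $\Q^{alg}[x]_E$ with $\widehat{\Q}[x]^E$, this free E-ring is the union of the chain $R_0 = \widehat{\Q}[x] \into R_1 \into \cdots$ with $R_{n+1}=R_n'$ and $A(R_0)=\widehat{\Q}$. The construction guarantees that the kernel of $\exp$ on $R_{n+1}$ equals the kernel of $\exp$ on $R_n$; iterating, the kernel of $\exp$ on $\widehat{\Q}[x]^E$ equals the kernel of $\exp$ on $R_0$, which is exactly $\ker(\exp\restrict{\widehat{\Q}})$. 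Thus it is enough to prove that no nonzero element of $\widehat{\Q}$ lies in the kernel of $\exp$.

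Next, suppose toward a contradiction that $z\in\widehat{\Q}$, $z\neq 0$, and $\exp(z)=1$. Applying the structure lemma for $Q_i$ repeatedly — first to $z$, then to the lower-depth elements it produces, and so on down to $\Q^{alg}$ — I would assemble a finite set $C\subset\widehat{\Q}$ with $z\in C$ such that every element of $C$ is algebraic over $\Q$ together with $\exp(C):=\{\exp(c):c\in C\}$. Indeed the base algebraic numbers are algebraic over $\Q$, and each higher element is, by the structure lemma, algebraic over finitely many algebraic numbers, lower-depth elements of $C$, and the exponentials of all of these; a straightforward induction on $\depth$ then shows that $C$ is algebraic over $\Q(\exp(C))$. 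Consequently $\td_\Q(C\cup\exp(C))=\td_\Q(\exp(C))$.

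Now I would pass to a $\Q$-linear basis. Choose a $\Q$-basis $Y=\{y_1,\dots,y_k\}$ of the $\Q$-span of $C$ with $y_1=z$ and $Y\subseteq C$; this is possible since $z\neq 0$. As every $c\in C$ is a $\Q$-linear combination of $Y$, each $\exp(c)$ is (after clearing denominators) a root of a monomial in $\exp(Y)$ and hence algebraic over $\Q(\exp(Y))$; therefore $\td_\Q(\exp(C))=\td_\Q(\exp(Y))\le k$, and combining with the previous paragraph $\td_\Q(Y\cup\exp(Y))\le\td_\Q(C\cup\exp(C))\le k$. On the other hand, Schanuel's conjecture applied to the $\Q$-linearly independent tuple $Y$ gives $\td_\Q(Y\cup\exp(Y))\ge\ldim_\Q(Y)=k$. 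Hence equality holds throughout and $\exp(y_1),\dots,\exp(y_k)$ are algebraically independent over $\Q$. But $\exp(y_1)=\exp(z)=1$ is algebraic over $\Q$ and $k\ge 1$, a contradiction, completing the argument.

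The main obstacle is the bookkeeping in the last two paragraphs: the structure lemma delivers algebraic dependence, whereas the final contradiction is driven by the $\Q$-linear relation encoded in $\exp(z)=1$, so the delicate point is to organize the finitely many elements produced by the iterated lemma into a single $\Q$-linearly independent, self-sufficient set $Y$ containing $z$ over whose exponentials everything becomes algebraic — keeping the transcendence-degree bound pinned at $|Y|$ while translating the linear relation $y_1=z$ into the algebraic relation $\exp(y_1)=1$ among the $\exp(y_i)$.
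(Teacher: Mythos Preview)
Your argument is correct and follows essentially the same route as the paper: reduce to $\widehat{\Q}$ via the kernel-preserving property of the free construction, invoke the structure lemma to see that the relevant elements are algebraic over their exponentials, and then apply Schanuel's conjecture to a transcendence-degree count. The only cosmetic difference is that the paper organizes the endgame as an induction on depth (Schanuel forces $s$ to be $\Q$-linearly dependent on lower-depth elements, pushing it down a level), whereas you finish in one shot by extracting a basis $Y\ni z$ and deriving the contradiction $\exp(z)=1$ against algebraic independence of $\exp(Y)$; your version is slightly cleaner and avoids the inductive bookkeeping.
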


\begin{proof}
Since free constructions do not add to the kernel, it suffices to show that the kernel of the exponential map on $\widehat{\Q}$ is
$\{0\}$. To accomplish this, we will induct on depth.

\noindent Suppose that $q \in \Q^{alg}$ and $\exp(q)=1$.  Then $td_\Q(q,\exp(q))= 0$ and by Schanuel's conjecture, $q=0$.  Thus
$Q_0 \cap \ker(\exp_{\widehat{\Q}}) = \{0 \}$.
Suppose for purposes of induction that  $Q_i \cap \ker(\exp_{\widehat{\Q}}) =\{ 0\}$.

\noindent Now suppose that $s\in Q_{i+1} \cap \ker(\exp_{\widehat{\Q}})$.

Let $q_1,...,q_n, s_1,...,s_m$ be as above.  Then since for all $j$, $s_j$ is algebraic over
$\{q_1,...,q_n, \exp(q_1),\dots,\exp(q_n), \exp(s_1),\dots,\exp(s_m)\}$, $s$ is algebraic over $\{q_1,...,q_n,s_1,...,s_m,\exp(q_1),...,\exp(q_n),\exp(s_1),\exp(s_m)\}$,
and $\exp(s) = 1$, we have  $$\td(q_1,...,q_n,s_1,...,s_m,s,\exp(q_1),...,\exp(q_n),\exp(s_1),\dots,\exp(s_m),\exp(s))$$
$$= \td(q_1,...,q_n,\exp(q_1),...,\exp(q_n),\exp(s_1),...,\exp(s_m)).$$

\noindent Thus, since $q_1,...,q_n \in \Q^{alg}$, we have
  $$\td(q_1,...,q_n,s_1,...,s_m,s,\exp(q_1),...,\exp(q_n),\exp(s_1),\dots,\exp(s_m),\exp(s))$$$$ \leq n+m $$

Thus, since $q_1,...,q_n,s_1,...,s_m$ are $\Q$-linearly independent, Schanuel's conjecture implies that $s$ is $\Q$-linearly dependent on $q_1,...,q_n,s_1,...,s_m$ which in turn
implies that $s \in Q_{n}$.  But we assumed that $Q_n \cap \ker(\exp_{\widehat{\Q}})= \{0\}$.  Thus $s=0$.

\end{proof}

\section{Decompositions of p}
We now fix an exponential polynomial $p(x) \in \Q^{alg}[x]_E$.

\begin{defn} We will call a set $T$ of exponential polynomials a \emph{decomposition of $p$} if it is a minimal set of
exponential polynomials such that:

\begin{itemize}
\item $\exists t_1,...,t_k \in T : p \in \Q^{alg}[x,\exp(t_1),...,\exp(t_k)]$, the subring of $\Q^{alg}[x]_E$ generated by
$x, \exp(t_1),...,\exp(t_k).$
\item $t_i \in T \then \exists t_1,...,t_l \in T : t_i \in \Q^{alg}[x,\exp(t_1),...,\exp(t_l)].$
\item There is an $L \in \Z^*$ such that $\frac{x}{L} \in T$.
\end{itemize}

\end{defn}
\noindent We will call elements of $T$ \emph{$T$-bricks}.

 Let $p^* \in \Q^{alg}[x,\y]$ be such that $p^*(x,\exp(x),\exp(t_{1}),...,\exp(t_\alpha)))=p$ as in the first part of the definition.
\medskip

Consider the parallel between exponential polynomials and terms in the language
$\L = \{+,-,\cdot, 0,1, \exp\} \cup \{c_k : k \in \Q^{alg}\}$.  This parallel extends to subterms and $T$-bricks.
 Considering this parallel, notice that every $T$-brick can be written as a polynomial in
 $x$ and the exponential image of the $T$-bricks of lower height.  Furthermore, all decompositions are finite. To
 satisfy the third bullet consider the following: While there are several terms which correspond to the same polynomial,
 we can choose one such term and take the least common multiple of the denominators of the rational
 coefficients of all the elements of $x$ which appear in the term.

\begin{example}
Consider $p(x)= \exp(\exp(\frac{x}{2}+ x^2)) +x^3$. Then $T = \{\frac{x}{2}, x^2, \exp(\frac{x}{2}+
x^2)\}$ is a decomposition of $p$. Notice that $\frac{x}{2}+ x^2$ is not in the decomposition since
$\exp(\frac{x}{2}+ x^2) = \exp(\frac{x}{2})\exp(x^2)$.
\end{example}

\medskip

\begin{defn}  We say that a decomposition $T$ is a \emph{refined decomposition} if $T$ is $\Q-$linearly independent over
$\Q^{alg}$.
\end{defn}

Recall the following fact: (See \cite{Me1})
\begin{lemma}  Given a decomposition $T$, we can form a refined decomposition $T'$.
\end{lemma}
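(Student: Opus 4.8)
The plan is to run an elimination procedure driven by the single structural fact that $\exp$ is a group homomorphism from $\langle\Q^{alg}[x]_E,+\rangle$ to the units: any $\Q$-linear relation among the $T$-bricks, once its denominators are cleared, becomes a \emph{multiplicative} relation with integer exponents among the corresponding exponentials. Concretely, I would first list $T=\{t_1,\dots,t_r\}$ --- finite, since all decompositions are finite --- in order of non-decreasing height. If $T$ is already $\Q$-linearly independent over $\Q^{alg}$ there is nothing to prove; otherwise there is a nontrivial relation $\sum_i \lambda_i t_i = c$ with $\lambda_i\in\Q$ not all zero and $c\in\Q^{alg}$. Among the bricks occurring with $\lambda_i\neq 0$ I would pick one, $t_j$, of maximal height.

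Next I would use the relation to delete $t_j$. Clearing denominators so that each coefficient becomes an integer and applying $\exp$ turns the additive relation into a monomial identity $\prod_i \exp(t_i)^{n_i}=\exp(Dc)$, with $n_i\in\Z$ and $\exp(Dc)\in\widehat{\Q}$; after the preparation described below this lets me write $\exp(t_j)$ as a Laurent monomial in the remaining $\exp(t_i)$ times a constant from $\widehat{\Q}$. I would then substitute this expression for $\exp(t_j)$ everywhere it occurs --- in the polynomial witnessing $p\in\Q^{alg}[x,\exp(t_1),\dots]$ and in the polynomials witnessing the higher bricks --- thereby removing $t_j$ from $T$. Because $t_j$ has maximal height in the relation, $\exp(t_j)$ is rewritten using only exponentials of bricks of height $\le\height(t_j)$, so no circular dependence arises and the second defining property is preserved; the rescaling brick $\tfrac{x}{L}$ is never the brick eliminated (a height-$0$ relation is mere linear algebra among polynomials in $x$, where I retain a basis still containing a suitable multiple of $x$), so the third property survives. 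Each pass strictly decreases $r$, so the procedure terminates, and by construction the terminal set $T'$ is a minimal decomposition that is $\Q$-linearly independent over $\Q^{alg}$, i.e.\ refined.

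The main obstacle is the interaction between the coefficient field and the homomorphism. $\exp$ converts only \emph{integer}-linear relations into genuine polynomial identities among exponentials: a rational coefficient $a/b$ produces $\exp(t_i)^{a/b}$, a radical rather than a polynomial, while a dependence that is $\Q^{alg}$-linear but not $\Q$-linear cannot be eliminated at all --- indeed $\exp(\alpha s)$ and $\exp(s)$ are algebraically independent for $\alpha\notin\Q$, which is exactly why $\Q$-independence, not $\Q^{alg}$-independence, is the correct target. Clearing denominators addresses the radicals but leaves a primitivity problem: from $\prod_i\exp(t_i)^{n_i}=\exp(Dc)$ one recovers only $\exp(t_j)^{n_j}$, not $\exp(t_j)$ itself, when $|n_j|>1$. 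This is precisely where the built-in rescaling enters: I would pass to a common denominator $L$ for all the bricks and replace the affected bricks by a $\Z$-basis of the lattice they generate modulo $\Q^{alg}$ (in effect adjoining the primitive brick $t_j/L$, in the spirit of the third defining bullet), so that the retained exponentials genuinely express $\exp(t_j)$ as an honest monomial. Checking that this lattice-basis replacement can be carried out while simultaneously preserving minimality and respecting the height filtration is the delicate bookkeeping; everything else reduces to routine substitution.
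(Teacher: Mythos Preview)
The paper does not actually prove this lemma: it is quoted as a known fact with a bare citation to \cite{Me1}, so there is no in-paper argument against which to compare your proposal. Your elimination-by-linear-relations strategy is the natural one and is presumably what the cited source does.

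There is, however, one place where your write-up does not close the loop against the definition \emph{as stated in this paper}. Your lattice-basis manoeuvre cures the rational-exponent problem, but the substitution still lands you in a \emph{Laurent} ring in the surviving $\exp(s_i)$, not in the polynomial subring the definition of ``decomposition'' literally demands. Concretely, take $p(x)=\exp(x)+\exp(-x)$ with the minimal decomposition $T=\{x,-x\}$: the $\Z$-lattice modulo $\Q^{alg}$ has basis $\{x\}$, yet $p\notin\Q^{alg}[x,\exp(x)]$ because $\exp(-x)=\exp(x)^{-1}$ does not lie in the subring generated by $x$ and $\exp(x)$; no rescaling $x\mapsto x/L$ repairs this. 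A related wrinkle is that your substitution produces coefficients of the form $\exp(c)$ with $c\in\Q^{alg}$, and these live in $\widehat{\Q}$ rather than in $\Q^{alg}$, so the coefficient ring also drifts.

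Both points are almost certainly artifacts of the definition here rather than genuine obstructions --- note that the paper itself, a few lines later, freely manipulates ``polynomials with rational exponents'' in the $Y_i$, tacitly working in a Laurent (indeed radical) extension --- and \cite{Me1} presumably sets matters up so that $\exp(t_i)^{-1}$ and $\exp(\Q^{alg})$ are available from the outset. But against the literal definition given in \emph{this} paper your substitution step does not stay inside the required subring; you should flag explicitly that the argument needs ``the subring generated by $x,\exp(t_1),\dots,\exp(t_k)$'' to be read as the $\widehat{\Q}$-subalgebra generated by $x$ together with the $\exp(t_i)^{\pm 1}$.
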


We now fix a refined decomposition $T$ of $p$, and let $p^*$ witness this as above. Elements of $T$ will be called $t_i$ for
$1 \leq i \leq \alpha$ where $t_i \neq t_j$ for $i \neq j$ and $|T| = \alpha$.

\section{Collapsing Points}

Let $a_i(x)\in \Q^{alg}[x]$ be nonzero polynomials and $g_i(x)$ exponential polynomials in $\Q^{alg}[x]_E$ such that $g_i(x) \neq g_j(x)$ for $i\neq j$ and  $$p(x) = \sum_{i=1}^{m} a_i(x)\exp(g_i(x)).$$

Fix these choices of $a_i$ and $g_i$.

\begin{defn}  We say \emph{$p$ collapses at $\beta$} if either $a_i(\beta)=0$ for all $i$, or there is some $i,j$ , $i
\neq j$ and $g_i(\beta)= g_j(\beta)$.
\end{defn}

\begin{theorem} Suppose $\beta \in \Q^{alg}$ and $p(\beta)=0$.  Then Schanuel's conjecture implies that $p$ collapses at $\beta$.

\end{theorem}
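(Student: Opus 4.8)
The plan is to assume the contrary and extract a contradiction from Schanuel's conjecture, reusing the transcendence-degree bookkeeping from the proof that $\exp$ is injective on $\Q^{alg}[x]_E$. Suppose $\beta\in\Q^{alg}$ with $p(\beta)=0$ but $p$ does not collapse at $\beta$. Since $\beta\in\Q^{alg}\subseteq\widehat{\Q}$, substitution $x\mapsto\beta$ is an E-ring homomorphism $\Q^{alg}[x]_E\to\widehat{\Q}$, so evaluating $p=\sum_{i=1}^m a_i(x)\exp(g_i(x))$ gives $\sum_{i=1}^m c_i\exp(\gamma_i)=0$ with $c_i:=a_i(\beta)\in\Q^{alg}$ and $\gamma_i:=g_i(\beta)\in\widehat{\Q}$. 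Non-collapse says that not all $c_i$ vanish and that the $\gamma_i$ are pairwise distinct (an equality $g_i(\beta)=g_j(\beta)$ would itself be a collapse). Deleting the indices with $c_i=0$, and noting that a lone nonzero term cannot vanish because $\exp$ lands in the units, we obtain a relation $\sum_{i=1}^r c_i\exp(\gamma_i)=0$ with $r\ge2$, pairwise distinct $\gamma_i\in\widehat{\Q}$, and all $c_i\in\Q^{alg}\setminus\{0\}$. It therefore suffices to prove a Lindemann--Weierstrass statement inside $\widehat{\Q}$: under Schanuel's conjecture, the exponentials of finitely many distinct elements of $\widehat{\Q}$ are linearly independent over $\Q^{alg}$.

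To contradict such a relation, set $V=\operatorname{span}_\Q(\gamma_1,\dots,\gamma_r)$ and pick a $\Z$-basis $b_1,\dots,b_e$ of the finitely generated, torsion-free group $\Z\gamma_1+\cdots+\Z\gamma_r$; this is also a $\Q$-basis of $V$, and each $\gamma_i=\sum_k n_{ik}b_k$ with $n_{ik}\in\Z$ and pairwise distinct exponent vectors. Put $y_k:=\exp(b_k)$. Because $\exp$ is a homomorphism, the relation becomes $\sum_i c_i\prod_k y_k^{n_{ik}}=0$; after clearing Laurent denominators by a monomial this is a nonzero polynomial identity in the $y_k$ over $\Q^{alg}$ (at least two distinct monomials, nonzero coefficients). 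Hence $y_1,\dots,y_e$ are algebraically dependent over $\Q^{alg}$, and some $y_{k_0}$ is algebraic over $\Q^{alg}(y_k:k\ne k_0)$. Next, iterate the unwinding lemma for elements of the $Q_i$, expanding $b_1,\dots,b_e$ through decreasing depth down to $\Q^{alg}$, to produce a finite $\Q$-linearly independent set $B\subseteq\widehat{\Q}$ of size $d$ containing $b_1,\dots,b_e$, with the property that every positive-depth element of $B$ is algebraic over the strictly-lower-depth elements of $B$ together with their exponentials (and the depth-$0$ elements lie in $\Q^{alg}$). Schanuel's conjecture applied to $B$ then gives $\td_\Q(B,\exp B)\ge d$.

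Conversely, this triangular structure yields $\td_\Q(B,\exp B)\le d$: generate the field one element at a time in order of nondecreasing depth, adjoining each $b\in B$ and then $\exp(b)$; each $b$ contributes nothing, being algebraic over the already-present lower-depth data, while each $\exp(b)$ contributes at most $1$, for a total of at most $d$. The relation now forces one of these increments to vanish. Arrange the order so that $b_1,\dots,b_e$ are processed last and $y_{k_0}=\exp(b_{k_0})$ is the final generator adjoined; at that stage every other $y_k$ is already present, so $y_{k_0}$, being algebraic over $\Q^{alg}(y_k:k\ne k_0)$, contributes nothing either. Hence $\td_\Q(B,\exp B)\le d-1$, contradicting Schanuel, and the theorem follows.

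I expect the crux to be precisely this count. The subtlety is that $b_k$ and $\exp(b_k)$ cannot be regarded as independent sources of transcendence, so the mere equality $\td_\Q(B,\exp B)=|B|$ does not suffice: the relation lives among the top exponentials, yet the transcendence it removes could a priori be charged to the $b_k$ themselves. Placing the $\gamma_i$-span basis at the very top of the tower---so that nothing else in $B$ depends on it---is exactly what allows the relation to cancel a full increment without disturbing the lower algebraicity. The remaining routine points are the iteration of the unwinding lemma while preserving $\Q$-linear independence (handled as in the injectivity lemma) and the monomial clearing of the Laurent relation.
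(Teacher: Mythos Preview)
There is a real gap in your ordering step. You assert that the $\gamma_i$-span basis $b_1,\dots,b_e$ can be placed ``at the very top of the tower---so that nothing else in $B$ depends on it,'' and hence processed last. This fails whenever the $\gamma_i$ are themselves nested through $\exp$: for instance if $\gamma_1=e^q$ and $\gamma_2=e^{\gamma_1}$ with $q\in\Q^{alg}$, then for any $\Z$-basis $b_1,b_2$ of $\Z\gamma_1+\Z\gamma_2$ the depth-$2$ member has its algebraicity over lower-depth data \emph{through} $\exp(\gamma_1)$, so the depth-$1$ basis element cannot be pushed past it. The repair is not to force all $b_k$ last, but to process $B$ in nondecreasing depth and pick $k_0$ so that $b_{k_0}$ has maximal depth among those $b_k$ genuinely occurring in the polynomial relation (breaking ties so $\exp(b_{k_0})$ is last within its depth class). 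Then every other $y_k$ in the relation is already present when $\exp(b_{k_0})$ is adjoined, so that increment vanishes; elements of $B$ of still higher depth are processed afterwards and contribute zero as before, since $\exp(b_{k_0})$ is now available to them. A smaller wrinkle: the iterated unwinding does not obviously produce a linearly independent $B$ literally \emph{containing} $b_1,\dots,b_e$; what it gives is $b_1,\dots,b_e\in\operatorname{span}_\Q(B)$, and after rescaling $B$ by a common denominator the relation among the $y_k$ becomes a genuine polynomial relation among $\exp(B)$, which is all you need.

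For comparison, the paper never passes to $\widehat{\Q}$ or isolates a Lindemann--Weierstrass lemma. It applies Schanuel directly to $\{t_i(\beta)\}_{i\le\alpha}$, the values of the refined decomposition; this set already carries the triangular structure (each $t_i(\beta)$ is algebraic over $\beta$ and the exponentials of lower-height bricks), so $\td(\{t_i(\beta)\},\{\exp t_i(\beta)\})=\td(\{\exp t_i(\beta)\})<\alpha$. Schanuel then yields a $\Q$-linear dependence among the $t_i(\beta)$, which is substituted back into $p^*$; algebraic independence of a transcendence basis among the $\exp t_i(\beta)$ forces two monomials $\psi_i,\psi_j$ to coincide, and injectivity of $\exp$ unwinds this to $g_i(\beta)=g_j(\beta)$. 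Your route extracts a reusable statement about $\widehat{\Q}$; the paper's stays inside the syntax of $p$ and avoids building $B$ by hand.
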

\begin{proof}
To begin this proof, we will need to set some notation.

Let $p, g_i,$ and $a_i$ be as above.
The we have $$p(x) =  \sum_{i=1}^{m} a_i(x)\exp(g_i(x))= p^*(x,\exp(t_1(x)),...,\exp(t_\alpha(x))).$$

We also have $$p^*(x,Y_1,...,Y_\alpha) = \sum_{i=1}^{m} a_i(x)\psi_i(Y_1,...,Y_\alpha)$$ and $$p^*(\beta,Y_1,...,Y_\alpha) = \sum_{i=1}^{m} a_i(\beta)\psi_i(Y_1,...,Y_\alpha)$$
where $\psi_i$ is a monomial for all $i$.
Notice that since $g_i(x) \neq g_j(x)$ for $i\neq j$, $\psi_i(Y_1,...,Y_\alpha) \neq \psi_j(Y_1,...,Y_\alpha)$ for $i\neq j$.

\bigskip

\noindent Now suppose that for some $1\leq i \leq m$, $a_i(\beta) \neq 0$.  Then $p^*(\beta, Y_1,...,Y_\alpha) \neq 0$.
Since $p^*(\beta,\exp(t_1(\beta)),...,\exp(t_\alpha(\beta)))=0$ we know that for some $d < \alpha,$
 $$\td(\exp(t_1(\beta)),...,\exp(t_\alpha(\beta)))\leq d.$$

Since $\beta$ is algebraic and the $T$-bricks are algebraic over $x$ and the exponential image of $T$, we know that

$$\td(\exp(t_1(\beta)),...,\exp(t_\alpha(\beta))) = \td(\beta, t_1(\beta),...,t_\alpha(\beta), \exp(t_1(\beta)),...,\exp(t_\alpha(\beta))).$$

Since $\beta$ is algebraic, this is equal to

$$ \td( t_1(\beta),...,t_\alpha(b), \exp(t_1(\beta)),...,\exp(t_\alpha(\beta)))$$

and we get

$$\td( t_1(\beta),...,t_\alpha(b), \exp(t_1(\beta)),...,\exp(t_\alpha(\beta)))=d < \alpha.$$

\noindent Assuming Schanuel's conjecture, we know that the $\Q$-linear dimension of of $\{t_1(\beta),...,t_\alpha(\beta)\}$ is at most  $d.$  Thus we can reorder the $T$-bricks so that for all $\alpha\geq j>d$,we have $$t_j(\beta) = \sum_{i=1}^{d} \frac{ m_{j,i}}{L}t_i(\beta)$$
where $m_{i,j} \in \Q.$

Now let $r(x,Y_1,...,Y_d)$ be the polynomial with rational exponents such that
$$r(x,Y_1,...,Y_d) = p^*\Big(x,Y_1,...,Y_d, \prod_{i=1}^d Y_i^{m_{i,d+1}},...., \prod_{i=1}^d Y_i ^{m_{i,\alpha}}\Big).$$

So $$r(x,Y_1,...,Y_d) = \sum_{i=1}^m a_i(x)\phi(Y_1,...,Y_d)$$ where for each $i$, $$\phi_i(Y_1,...,Y_d) = \psi_i\Big(Y_1,...,Y_d,\prod_{i=1}^d Y_i^{m_{i,d+1}},...., \prod_{i=1}^d Y_i ^{m_{i,\alpha}}\Big)$$ and is thus of the form $\prod Y_i^{q_i}$ for some $q_i \in \Q$.

Now we must compile all the information we have.

$$r(\beta, \exp(t_1(\beta)),...,\exp(t_d(\beta)))$$ $$= p^*\Big(\beta, \exp(t_1(\beta)),...,\exp(t_d(\beta)),\prod_{i=1}^{d} \exp(m_{i,d+1}t_i(\beta)),...,\prod_{i=1}^{d} \exp(m_{i,\alpha}t_i(\beta))\Big)$$
$$= p^*(\beta,\exp(t_1(\beta)),...,\exp(t_\alpha(\beta))$$ $$=p(\beta) = 0.$$

Since $\{\exp(t_1(\beta)),...,\exp(t_d(\beta))\}$ is algebraically independent, we know $r(\beta, Y_1,...,Y_d)=0$.

Since $r(\beta,Y_1,..., Y_d) = \sum_{i=1}^m a_i(\beta)\phi(Y_1,...,Y_d)$ and for some $i$, $a_i(\beta)\neq 0$ we know that
$\phi_i(Y_1,...,Y_d) = \phi_j(Y_1,...,Y_d)$ for some $i \neq j$.  But then $\phi_i (\exp(t_1(\beta)),...,\exp(t_d(\beta))) = \phi_j(\exp(t_1(\beta)),...,\exp(t_d(\beta)))$ and we have

$$\psi_i\Big(\exp(t_1(\beta)),...,\exp(t_d(\beta)),\prod_{k=1}^d \exp(m_{k,d+1}t_k),...,\prod_{k=1}^d \exp(m_{k,\alpha}t_k)\Big)$$$$=
 \psi_j\Big(\exp(t_1(\beta)),...,\exp(t_d(\beta)),\prod_{k=1}^d \exp(m_{k,d+1}t_k),...,\prod_{k=1}^d \exp(m_{k,\alpha}t_k)\Big)$$

and

$$\psi_i(\exp(t_1(\beta)),...,\exp(t_\alpha(\beta))) = \psi_j(\exp(t_1(\beta)),...,\exp(t_\alpha(\beta))).$$

Thus $\exp(g_i(\beta)) = \exp(g_j(\beta))$ and since $\exp$ is injective on $\Q^{alg}[x]_E$, $g_i(\beta) = g_j(\beta)$.

\end{proof}

\begin{cor}
Assume Schanuel's conjecture. Then if $p \in \Q^{alg}[x]_E$, then $p$ has only finitely many algebraic zeros.

\end{cor}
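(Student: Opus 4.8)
The plan is to prove the corollary by induction on $\height(p)$, using the collapsing theorem as the inductive engine. First I would dispose of the trivial case: the conclusion is vacuous unless $p \not\equiv 0$, so assume $p \neq 0$ and fix the representation $p(x) = \sum_{i=1}^m a_i(x)\exp(g_i(x))$ with the $a_i \in \Q^{alg}[x]$ nonzero and the $g_i$ pairwise distinct, exactly as set up in the ``Collapsing Points'' section. The observation that drives the induction is that when $p$ has positive height, each exponent $g_i$ has strictly smaller height than $p$: since then $p \in R_{\height(p)} = R_{\height(p)-1}'$, whose exponential domain is exactly $R_{\height(p)-1}$, every $\exp(g_i)$ occurring in $p$ forces $g_i \in R_{\height(p)-1}$, that is $\height(g_i) \le \height(p)-1$.

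For the base case $\height(p)=0$, the polynomial $p$ lies in $\Q^{alg}[x]$ and, being nonzero, has only finitely many zeros. For the inductive step, I would assume the claim for every nonzero exponential polynomial of height $< \height(p)$. By the collapsing theorem, every algebraic zero $\beta$ of $p$ is a point at which $p$ collapses, so $\beta$ falls into at least one of two cases, and it suffices to bound the algebraic $\beta$ arising in each.

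In the first case $a_i(\beta)=0$ for all $i$; then $\beta$ is in particular a root of the fixed nonzero polynomial $a_1 \in \Q^{alg}[x]$, of which there are only finitely many. In the second case $g_i(\beta) = g_j(\beta)$ for some $i \neq j$, so $\beta$ is an algebraic zero of the exponential polynomial $g_i - g_j$. Since the $g_i$ are pairwise distinct, $g_i - g_j \not\equiv 0$, and by the height observation above $\height(g_i - g_j) \le \max(\height(g_i),\height(g_j)) < \height(p)$; hence the inductive hypothesis applies and $g_i - g_j$ has only finitely many algebraic zeros. As there are only finitely many pairs $(i,j)$, the second case also contributes only finitely many $\beta$, and combining the two cases shows $p$ has finitely many algebraic zeros.

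The only real subtlety, and hence the step I would be most careful about, is the height bound $\height(g_i) < \height(p)$: it is what makes the induction well-founded, and it must be paired with the remark that distinctness of the $g_i$ guarantees $g_i - g_j \not\equiv 0$, so that the inductive hypothesis may legitimately be invoked on $g_i - g_j$. Everything else is bookkeeping over the finitely many terms and the finitely many pairs.
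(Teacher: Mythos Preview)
Your proof is correct and follows exactly the same route as the paper: induction on $\height(p)$, with the collapsing theorem supplying the inductive step by reducing every algebraic zero to a root of some $a_i$ or of some $g_i-g_j$. You are in fact more explicit than the paper about the two points you flag as subtle---the height drop $\height(g_i-g_j)<\height(p)$ and the nontriviality of $g_i-g_j$---which the paper's one-line inductive step leaves implicit.
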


\begin{proof}This is a simple induction on height.

\noindent Base case: $\height(p)=0$.  The $p$ is a polynomial in one variable and has only finitely many zeros.
\noindent inductive step:  Suppose $p(\beta) =0$.  Then $p$ collapses at $\beta$.  So $g_i = g_j$ for some $i\neq j$ or $a_i(\beta)=0$ for all $i$.
Each of these options implies that $\beta$ is a zero of one of finitely many nontrivial exponential polynomials of lower height.  Each of these have only finitely many
algebraic zeros by induction.

\end{proof}

\begin{cor} Assume Schanuel's conjecture. Let $p,q \in \Q^{alg}[x]_E$ so that $p$ and $q$ have no common factors aside from units and are both of height 1.
Then $p$ and $q$ have only finitely many common zeros.
\end{cor}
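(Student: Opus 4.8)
The plan is to prove the contrapositive: if $p$ and $q$ have infinitely many common zeros, they share a non-unit factor. First I would realize both polynomials over a single decomposition. Taking a refined decomposition $T=\{t_1,\dots,t_\alpha\}$ of the product $pq$ — so that the $t_i$ are polynomials (the height-$0$ bricks), $\Q$-linearly independent over $\Q^{alg}$ — I can write $p=p^*(x,\exp(t_1),\dots,\exp(t_\alpha))$ and $q=q^*(x,\exp(t_1),\dots,\exp(t_\alpha))$ with $p^*,q^*$ Laurent polynomials in $Y_i=\exp(t_i)$ over $\Q^{alg}[x]$. Since $p,q$ have height $1$, each unit $\exp(g)$ occurring as a factor corresponds to a Laurent monomial in the $Y_i$, and, using that $\exp$ is injective on $\Q^{alg}[x]_E$ (the Lemma above, which is where Schanuel's conjecture enters structurally), the hypothesis that $p,q$ share no non-unit factor translates into: $p^*$ and $q^*$ are coprime as Laurent polynomials in $Y_1,\dots,Y_\alpha$ over $\Q^{alg}(x)$. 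Setting up this dictionary between factorizations in $\Q^{alg}[x]_E$ and factorizations of the associated Laurent polynomials up to monomial units is the first technical point, and here I would appeal to the factorization theory of \cite{Me1}.

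Second, the Schanuel input, which runs parallel to the Collapsing Points theorem. Fix a common zero $\beta$; then $(\exp(t_1(\beta)),\dots,\exp(t_\alpha(\beta)))$ lies on $V=\{p^*(\beta,\cdot)=q^*(\beta,\cdot)=0\}$. Put $\ell=\ldim_\Q\{t_1(\beta),\dots,t_\alpha(\beta)\}$ and take $t_1(\beta),\dots,t_\ell(\beta)$ independent, so that for $j>\ell$ each $E_j:=\exp(t_j(\beta))$ is a fixed Laurent monomial in $E_1,\dots,E_\ell$. As every $t_i(\beta)\in\Q^{alg}(\beta)$, Schanuel's conjecture yields $\td_\Q(\beta,E_1,\dots,E_\ell)\ge\ell$, hence $\td_{\Q(\beta)}(E_1,\dots,E_\ell)\ge\ell-1$. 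Substituting the monomial relations into $p^*,q^*$ (exactly as $r(x,Y_1,\dots,Y_d)$ is formed in the Collapsing proof) gives Laurent polynomials $\hat p,\hat q$ in $E_1,\dots,E_\ell$ that both vanish at this point. If $\beta$ is transcendental these two facts are in tension: were $\hat p(\beta,\cdot),\hat q(\beta,\cdot)$ coprime, their common zero locus would have codimension $2$ and the point would have transcendence degree at most $\ell-2$ over $\Q(\beta)$. Thus every transcendental common zero forces a nontrivial $\Q$-linear dependence $\sum_i c_i t_i(\beta)=0$, i.e.\ $\beta$ is a root of a nonzero element of $W=\Q\text{-span}\{t_1,\dots,t_\alpha\}$, together with a genuine degeneration of $p^*,q^*$ along the corresponding subtorus $\{\prod_i Y_i^{c_i}=1\}$.

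The algebraic common zeros are already finite by the preceding Corollary, so it remains to bound the transcendental ones, and this is the step I expect to be the main obstacle. The difficulty is that the dependence produced above is $\beta$-dependent: a fixed relation $w\in W$ has only finitely many roots, yet a priori infinitely many distinct relations — one subtorus per relation — could occur, and no single pigeonhole collapses them (if one $w$ recurred for infinitely many $\beta$ then $w$ would vanish identically, contradicting the independence of the $t_i$ over $\Q^{alg}$). To close the argument I would induct on $\alpha$: restricting the coprime pair $p^*,q^*$ to the subtorus attached to a relation $w$ yields Laurent polynomials in fewer variables whose degeneration makes them fall under the inductive hypothesis, with base case $\alpha\le 1$ reducing to ordinary polynomials. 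The crux — what must be shown with care — is that this degeneration can persist for infinitely many $\beta$ only if $p^*$ and $q^*$ already have a common Laurent factor, which under the dictionary of the first step is precisely the common non-unit factor of $p$ and $q$ that we set out to produce.
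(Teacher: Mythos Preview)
Your setup and your Schanuel step match the paper's, and in fact your argument is already finished at the point where you write that a transcendental common zero $\beta$ must be ``a root of a nonzero element of $W=\Q\text{-span}\{t_1,\dots,t_\alpha\}$.'' Since the $t_i$ have height $0$, we have $W\subset\Q^{\alg}[x]$; a root of a nonzero polynomial in $\Q^{\alg}[x]$ is algebraic. This contradicts the assumption that $\beta$ is transcendental, so there are \emph{no} transcendental common zeros at all, and finiteness of the algebraic ones (the preceding Corollary) completes the proof. The ``main obstacle'' you describe --- that the relation $w$ is $\beta$-dependent, that infinitely many subtori might occur, that one should induct on $\alpha$ --- never arises: the issue is not \emph{how many} roots a given $w$ has, but that every root of every nonzero $w\in W$ is automatically algebraic.

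The paper's route is also a touch more direct than yours. Rather than first passing to $\ell=\ldim_\Q\{t_i(\beta)\}$ variables and then worrying about whether $\hat p(\beta,\cdot),\hat q(\beta,\cdot)$ stay coprime after the monomial substitution and the specialization $x\mapsto\beta$, the paper argues in all $\alpha+1$ variables at once: coprimality of $p^*,q^*$ in $\Q^{\alg}[x,Y_1,\dots,Y_\alpha]$ forces their common zero locus to have dimension at most $\alpha-1$, hence $\td_\Q(\beta,E_1,\dots,E_\alpha)\le\alpha-1$; since each $t_i(\beta)\in\Q^{\alg}[\beta]$ this equals $\td_\Q(t_1(\beta),\dots,t_\alpha(\beta),E_1,\dots,E_\alpha)$, and Schanuel then gives $\ldim_\Q\{t_i(\beta)\}\le\alpha-1$, hence a dependence, hence $\beta\in\Q^{\alg}$.
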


\begin{proof}
Let $T = \{t_1(x),...,t_\alpha(x)\}$ be a refined decomposition of both $p$ and $q$.  (Simply require that both can be constructed using $T$).
Suppose $p(\beta) = q(\beta) = 0$.  For some integer $L$, we know that  $\frac{x}{L}$ is one of the $T$-bricks and that every other
$T$-brick is algebraic over this $T$-brick and the exponential image of $T$. Therefore we know that

 $$\td(t_1(\beta),...,t_\alpha(\beta), \exp(t_1(\beta)),...,\exp(t_\alpha (\beta)))= \td(\frac{\beta}{L},\exp(t_1(\beta)),...,\exp(t_\alpha (\beta))) $$

 It is clear than a common factor of $p^*$ and
$q^*$ would imply a common factor of $p$ and $q$. Therefore, since $p$ and $q$ have no common factors, $p^*$ and $q^*$ have no common
factors. Thus, since $$p^*(\beta, \exp(t_1(\beta)),...,\exp(t_\alpha (\beta))) = q^*(\beta, \exp(t_1(\beta)),...,\exp(t_\alpha (\beta)))= 0$$
  Thus, we have that

  $$\td(\beta,\exp(t_1(\beta)),...,\exp(t_\alpha (\beta)))\leq \alpha-1 $$

  and thus
$$\td(t_1(\beta),...,t_\alpha(\beta), \exp(t_1(\beta)),...,\exp(t_\alpha (\beta))) \leq \alpha-1.$$

Thus, Schanuel's conjecture implies that $t_1(\beta),...,t_\alpha(\beta)$ are $\Q$-linearly dependent.  Since $T$ is a refined decomposition, and comprised of polynomial $T$-bricks,
we can deduce that $t_1(\beta),...,t_\alpha(\beta)$ satisfy a non-trivial $\Q$ linear polynomial, and that $\beta$ satisfies a
nontrivial polynomial over $\Q^{alg}$.  Thus, $\beta$ is algebraic. By above, there are only finitely many algebraic zeros.
\end{proof}

\begin{cor}\emph{\bf{(Shapiro's conjecture over the algebraic numbers.)}}
Assume Schanuel's conjecture.  Suppose  $$p(x)= \sum_{i=1}^n a_i\exp(b_ix)$$ and
$$q(x)= \sum_{i=1}^m c_i\exp(d_ix)$$ where the $a_i,b_i,c_i,d_i \in \Q^{alg}$.   Then, if $p(x)$ and $q(x)$ have no common factors
aside from units, $p(x)$ and $q(x)$ have only finitely many common zeros.

\end{cor}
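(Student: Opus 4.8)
The plan is to obtain this corollary as a special case of the preceding corollary, which already shows that two relatively prime height-$1$ exponential polynomials in $\Q^{alg}[x]_E$ have only finitely many common zeros. Two things must be verified: that $p$ and $q$ sit inside $\Q^{alg}[x]_E$ with height at most $1$, and that the classical relative-primality hypothesis of Shapiro's conjecture matches the notion of having no common non-unit factor in $\Q^{alg}[x]_E$ demanded by that corollary.

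First I would note that each exponent $b_i x$ and $d_i x$ lies in $R_0 = K[x]$, so $\exp(b_i x), \exp(d_i x) \in R_1$ and hence $p, q \in R_1$, giving $\height(p), \height(q) \le 1$. The degenerate cases are immediate: if $\height(p) = 0$ then $p$ is an ordinary polynomial with only finitely many zeros, so $p$ and $q$ have finitely many common zeros (and symmetrically for $q$), and a single monomial $a_1 \exp(b_1 x)$ is a unit with no zeros at all. This leaves the case $\height(p) = \height(q) = 1$, which is precisely the setting of the preceding corollary.

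The real content is the factorization comparison. In the classical ring of exponential sums the units are exactly the monomials $c\exp(bx)$ with $c \neq 0$, and each of these is a unit of $\Q^{alg}[x]_E$ as well, since $c \in \Q^{alg}$ and $\exp(bx)$ is invertible. I must check that the hypothesis ``$p$ and $q$ have no common factor beyond such units'' forces them to have no common non-unit factor in $\Q^{alg}[x]_E$. I would argue this by contraposition: a common non-unit divisor of $p$ and $q$ in $\Q^{alg}[x]_E$ should, after multiplication by a suitable unit $\exp(g)$, descend to a common non-unit factor of $p$ and $q$ in the classical ring, contradicting the hypothesis. Because every exponent occurring in $p$ and $q$ is a $\Q^{alg}$-multiple of $x$, the relevant factorizations are controlled by the witnessing polynomials $p^*, q^* \in \Q^{alg}[x,\y]$ exactly as in the preceding results, so this descent is where the work lies; I expect it to be the main obstacle.

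With the hypothesis of the preceding corollary thus verified, it applies directly to the pair $p, q$ of height $1$ and yields only finitely many common zeros, completing the proof. The height bookkeeping and the invocation of the corollary are routine; essentially all of the difficulty is concentrated in transferring the relative-primality hypothesis between the classical exponential-sum ring and $\Q^{alg}[x]_E$, that is, in verifying that the classical units $c\exp(bx)$ account for all the units of $\Q^{alg}[x]_E$ that are relevant to factorizations of $p$ and $q$.
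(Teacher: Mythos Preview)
The paper gives no proof of this corollary at all; it is simply recorded as an immediate specialization of the preceding corollary on height-$1$ exponential polynomials. Your approach is exactly this, and the height bookkeeping and degenerate cases you list are correct and trivial.

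Your main concern --- transferring the relative-primality hypothesis from the classical ring of exponential sums to $\Q^{alg}[x]_E$ --- is more cautious than the paper. The corollary repeats verbatim the phrase ``no common factors aside from units'' from the preceding corollary, so the natural reading is that the hypothesis is meant in $\Q^{alg}[x]_E$; under that reading the result is literally a special case and no factorization comparison is needed. The abstract confirms this reading: it speaks of ``common factors only of the form $\exp(g)$'', and such $\exp(g)$ are exactly the non-constant units of $\Q^{alg}[x]_E$. If instead you want the statement to match the classical Shapiro hypothesis formulated in the ring of exponential sums, then the descent you sketch would indeed be required --- but the paper does not attempt this, and it is a separate issue tied to Ritt-type factorization of exponential sums rather than to anything in the present paper.

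In short: your plan is correct and coincides with the paper's implicit one-line proof; the factorization-transfer work you anticipate is not needed for the corollary as the paper actually states it.
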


\end{document}